\journal{xxx}
\newlength\imagewidth
\newtheorem{Theorem}{Theorem}
\theoremstyle{definition} \newtheorem{Example}{Example}
\definecolor{dgreen}{rgb}{0,.6,0}
\begin{document}

\begin{frontmatter}
\title{%Dynamic behavior of a microbial pesticide model with inhibiting effect
Modelling the inhibiting effect on a microbial pesticide model}
%抑制作用

\author{Xiaoxiao Cui%\footnote{Xiaoxiao Cui, Email: xxc_2017@126.com}
\,\,\,\, Yonghui Xia\footnote{Author for correspondence. Yonghui Xia, ORCID: 0000-0001-8918-3509.
Email: xiadoc@163.com; yhxia@zjnu.cn. This work was supported by the National Natural
Science Foundation of China under Grant (No. 11931016, 11671176).}
}
\address{ College of Mathematics and Computer Science, \\
Zhejiang Normal University, Jinhua, 321004, China}

 %微生物杀虫剂能避免传统化学杀虫剂带来的很多负面影响，本文考虑了一个昆虫病原线虫杀死目标昆虫，并且同时会抑制目标昆虫的出生率的模型。在模型中，我们通过连续释放线虫来达到制约或消灭害虫的目的。本文通过对模型平衡点稳定性以及Hopf分支周期解稳定性进行分析，得到了控制害虫的最佳方案，并用例子及数值模拟验证了得到的结果。

\begin{abstract}
Microbial pesticides can avoid many of negative effects of traditional chemical pesticides. To modelling the inhibiting effect, in this paper we propose a model of entomopathogenic nematodes killing the target insects, and inhibiting the birth rate of the target insects simultaneously. In the model, we achieve the purpose of restricting or eliminating pests by continuously releasing nematodes. By analyzing the stability of the equilibrium of the model and the stability of the Hopf bifurcation periodic solution, the best solution to control pests is obtained, and our conclusions are verified by examples and numerical simulations.\vspace{0.3cm}\\
{\bf Keywords.} microbial pesticide model; inhibiting effect; Hopf bifurcation.\\
\vspace{0.3cm}{\bf 2020 Mathematics Subject Classification.} 92D25; 34D20; 37G15.\\

\end{abstract}
\end{frontmatter}

\vspace{1cm}
\section{\Large{Introduction}}

\par\noindent

\subsection{Research motivation}
%农药在控制农业有害生物危害、增加粮食产量的同时，也带来了许多弊端。农药尤其是有机农药的不合理使用，不仅会对人体健康造成严重威胁（农药施用后，一部分附着于植物体上，或渗入株体内残留下来，使粮、菜、水果等受到污染, 另一部分散落在土壤上或蒸发、散逸到空气中，或随雨水及农田排水流入河湖，污染水体和水生生物，并最终进入人体，引发各种慢性或急性疾病。）还会引起农作物的药害、人畜中毒、农产品的残留超标、有害生物抗药性及环境污染等现象。

Pesticides can control agricultural pests and increase food production, but they also bring many disadvantages \cite{Ionel,Mikhail,James}. After pesticides are applied, part of them are adhere to plants, or penetrate into the plant body and remain, contaminating grains, vegetables, fruits, etc., and the other part of them are scatter on the soil or evaporate, escape into the air, or flow into the rivers with rainwater, polluting water bodies and aquatic organisms, and eventually entering the human body, causing various chronic or acute diseases. The unreasonable use of pesticides, especially organic pesticides, not only poses a serious threat to human health, but also causes crop phytotoxicity, human and livestock poisoning, excessive residues of agricultural products, pest resistance and environmental pollution, etc.

%微生物杀虫剂是利用微生物的活体制成的。在自然界，存在着许多对害虫有致病作用的微生物，利用这种致病性来防治害虫是一种有效的生物防治方法。从这些病原微生物中筛选出施用方便、药效稳定、对人畜和环境安全的菌种，制成微生物杀虫剂。微生物杀虫剂与化学合成杀虫剂相比，具有以下优点:（1）对目标以外的生物都是无害的;（2）害虫不易产生抗药性;（3）能保护害虫天敌;（4）不污染环境。这些特点使微生物杀虫剂成为适用于害虫综合防治的一类农药。

Microbial pesticides are made from living microorganisms. In nature, there are many microorganisms that have pathogenic effects on pests, and using this pathogenicity to control pests is an effective biological control method. From these pathogenic microorganisms, select bacteria that are convenient to use, stable in efficacy, safe to humans, animals, and the environment to make microbial insecticides. Compared with chemical synthetic pesticides, microbial pesticides have many advantages \cite{Robert,Sarwar,Starnes}, including: (1) they are harmless to organisms other than the target; (2) pests are not easy to develop resistance; (3) they can protect natural enemies of pests; (4) they do not pollute the environment. These characteristics make microbial pesticides a class of pesticides suitable for integrated pest control.

%昆虫病原线虫是一种很有发展前景的新型的微生物杀虫剂，它通过将肠道内的一种共生菌释放到寄主昆虫血腔之中，进而共生菌在血腔内大量繁殖并产生抑菌物质和毒素，使昆虫患败血症而亡。四川省农业科学院副院长、研究员任光俊说，”昆虫病原线虫作为昆虫的专化性寄生天敌，是具有天敌昆虫兼病原微生物双重特点的微生物杀虫剂，是一类重要的害虫生物防治因子，可以高效防治有害生物，又对非靶标生物及环境安全，因此，它在害虫可持续治理中具有重大的应用潜力。“

Entomopathogenic nematode is a kind of new-type and promising microbial pesticide \cite{Taha,Corne,Khoury}. It releases a kind of symbiotic bacteria in its intestine into the blood cavity of the host insect, and then the symbiotic bacteria multiply in the blood cavity and produce antibacterial substances and toxins, causing the host insect to suffer from sepsis and die. Guangjun Ren, deputy dean and researcher of Sichuan Academy of Agricultural Sciences, said, ``Entomopathogenic nematode, as specialized parasitic natural enemies of insects, is a kind of microbial pesticide with the dual characteristics of natural enemies and pathogenic microorganisms, and is an important biological control factor for pests. It can effectively control pests, and it is safe for non-target organisms and the environment. Therefore, it has great application potential in the sustainable management of pests".

\subsection{Model formulation}

In 2009, Wang and Chen \cite{Wang2009} formulated the following mathematical model in order to investigate the dynamics of nematodes attacking pests:
\begin{equation}\label{eq1}
\left\{\begin{aligned}
&\frac{dx}{dt}=rx-cxy,\\
&\frac{dy}{dt}=cxy^2-my,
\end{aligned}\right.
\end{equation}
where $x(t),y(t)$ denotes the density of pests and entomopathogenic nematodes, respectively. $r$ denotes the birth rate of pests and $m$ denotes the death rate of entomopathogenic nematodes. Moreover, the effects of nematodes' predation behavior on pests and nematodes are expressed as $-cxy$ and $+cxy^2$. Subsequently, Wang and Chen \cite{Wang2011} used the Poincar\'e map to analyze dynamic behaviors of the impulsive state of model \eqref{eq1}. In 2017, Wang et al. \cite{Wang2017} considered system \eqref{eq1} with the Monod growth rate. And in 2021, Wang \cite{Wang2021} studied model \eqref{eq1} with density dependent for pests.

%微生物在进行种群斗争时，常常会出现一种生物抑制另一种生物繁殖，也称为生物的干扰现象。这种干扰会抑制生物的出生率，有时比直接杀死它们影响更大。
%When microorganisms are engaged in population struggle, it often happens that one organism inhibits the reproduction of another organism, also known as biological interference \cite{Gao2021, Buck2014, Fletcher2020}. This interference can inhibit the birth rate of biology, and sometimes it's a bigger impact than just killing them.

%许多研究表明，某些种类的微生物A（或植物提取物）会对另一些种类的微生物B（细菌或真菌）产生抑制作用，这种抑制作用会降低微生物B的出生率，并且这种抑制效应与抑制因子和微生物A的密度成正比。而这种抑制作用就类似于体积稍大些的生物种群对天敌的恐惧效应。

{ Many studies have shown that some species of microorganism A (or plant extracts) can inhibit other species of microorganism B (bacteria or fungi) \cite{Okeniyi2020,Okeniyi2017,Coetser2005}, and this inhibiting effect can reduce the birth rate of microorganism B, and the strength of its influence is proportional to the inhibiting factor and the density of microorganism A. Consequently, the density of $B$ is inversely proportional to the density of $A$. That is,  the density of $B$ varies inverselyas the density of $A$. } In fact, this inhibiting effect is similar to the fear effect of predators in larger populations \cite{Barman,Gao,Sa,Wang2,Wang1,Wang,H}. Thus, using an inverse proportional function to modelling the inhibiting effect is very reasonable.
 %In 2016, Wang \cite{Wang2} investigated a model with the fear effect in predator-prey interactions, and in quick succession, the author \cite{Wang1} considered the above model with adaptive avoidance of predators.
In this paper, we add an inhibiting factor to the original model \eqref{eq1} to modify pests birth rate, and establish the following mathematical model to modelling  a microbial pesticide model with inhibiting effect:
\begin{equation}\label{eq1.1}
\left\{\begin{aligned}
&\frac{dx}{dt}=\frac{rx}{1+ky}-cxy,\\
&\frac{dy}{dt}=cxy^2-my,
\end{aligned}\right.
\end{equation}
where $\frac{1}{1+ky}$ is the inhibition function, $k$ is the level of inhibition, it means that if the density of nematodes or the level of inhibition was zero, there was no effect on pests; with the increase of the density of nematodes or the level of inhibition, the birth rate of pests would decrease.

Through the analysis of system \eqref{eq1.1}, we derive that in any case, there will be not a steady state, that is, the density of pests will keep increasing, reach destructive numbers and take a toll on the economy. Therefore, we will through continuous release of nematodes to control the density of pests. And then we have the following model:
\begin{equation}\label{eq1.2}
\left\{\begin{aligned}
&\frac{dx}{dt}=\frac{rx}{1+ky}-cxy,\\
&\frac{dy}{dt}=cxy^2-my+u,
\end{aligned}\right.
\end{equation}
where $u$ is the release rate of entomopathogenic nematodes, other parameters are the same as in systems \eqref{eq1} and \eqref{eq1.1}. And all parameters $r,k,c,m,u$ are positive.

\subsection{Organization of paper}

The structure of this paper is organized as follows. Next section is to study the stability of the equilibria and the nonexistence of limit cycle of system \eqref{eq1.1}. In Section 3, we prove the existence of Hopf bifurcation of system \eqref{eq1.2}. In Section 4, some numerical simulations are presented to show the feasibility of the main results. Finally, a conclusion ends the paper.

\section{\Large{Dynamic behavior of system \eqref{eq1.1}}}

\par\noindent

\begin{Theorem} {\label{Theorem 2.1}} System \eqref{eq1.1} always has two equilibria, a boundary equilibrium $E_0(0,0)$ and a positive equilibrium $E_1(x_1,y_1)$. Furthermore, $E_0$ is a saddle, $E_1$ is an unstable node or focus.
\end{Theorem}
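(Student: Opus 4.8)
The plan is to locate the equilibria by direct algebra and then read off their stability from the linearization, exploiting the equilibrium relations to simplify the Jacobian entries.

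First I would solve the equilibrium equations. Factoring the right-hand sides as $x\bigl(\frac{r}{1+ky}-cy\bigr)=0$ and $y(cxy-m)=0$, one checks that the vanishing of either coordinate forces the other to vanish as well, so the only boundary equilibrium is the origin $E_0(0,0)$. For an interior equilibrium the two bracketed factors must vanish: the first gives $cky^2+cy-r=0$, a quadratic with a single admissible root $y_1=\frac{-c+\sqrt{c^2+4ckr}}{2ck}$, which is positive because $\sqrt{c^2+4ckr}>c$; the second then gives $x_1=\frac{m}{cy_1}>0$. This establishes existence and uniqueness of the positive equilibrium $E_1(x_1,y_1)$, proving the first assertion.

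Next I would form the Jacobian, with diagonal entries $\frac{r}{1+ky}-cy$ and $2cxy-m$ and off-diagonal entries $-x\bigl(\frac{rk}{(1+ky)^2}+c\bigr)$ and $cy^2$. At $E_0$ the matrix is diagonal with eigenvalues $r>0$ and $-m<0$; the opposite signs make $E_0$ a saddle. For $E_1$ the key step is to substitute the equilibrium identities $\frac{r}{1+ky_1}=cy_1$ and $cx_1y_1=m$: the first annihilates the $(1,1)$ entry, while the second turns $2cx_1y_1-m$ into $m$. Consequently the trace equals $m>0$ and the determinant equals $cx_1y_1^2\bigl(\frac{rk}{(1+ky_1)^2}+c\bigr)>0$. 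A positive trace together with a positive determinant forces both eigenvalues to have positive real part, so $E_1$ is unstable; whether it is a node or a focus is then governed by the sign of the discriminant $m^2-4cx_1y_1^2\bigl(\frac{rk}{(1+ky_1)^2}+c\bigr)$, nonnegative giving an unstable node and negative giving an unstable focus.

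The computation itself is elementary, so I do not expect a genuine obstacle; the only real subtlety is recognizing that one must feed the equilibrium relations back into the Jacobian at $E_1$ to reduce it to the clean form above. Without that reduction the trace and determinant appear unwieldy and their signs are far from obvious, whereas after the substitution the conclusion is immediate.
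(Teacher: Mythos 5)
Your proposal is correct and follows essentially the same route as the paper: find $E_0$ and $E_1$ by direct algebra, then linearize and use the equilibrium identities to reduce $J(E_1)$ to a matrix with zero $(1,1)$ entry, trace $m>0$ and determinant $cx_1y_1^2\bigl(\tfrac{rk}{(1+ky_1)^2}+c\bigr)>0$, so that $E_0$ is a saddle and $E_1$ is an unstable node or focus. Your extra observations (uniqueness of the equilibria and the discriminant criterion distinguishing node from focus) go slightly beyond the paper's argument, and your determinant expression also fixes a typographical slip in the paper, where the sum inside the determinant is printed as a product.
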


\begin{proof} The equilibria of  \eqref{eq1.1} satisfy the  equations
\begin{equation}\label{eq2.1}
\left\{\begin{aligned}
&\frac{rx}{1+ky}-cxy=0,\\
&cxy^2-my=0.
\end{aligned}\right.
\end{equation}
Obviously, equation \eqref{eq2.1} has nonnegative solutions
$$x_0=0,~~~y_0=0,$$
and
$$x_1=\frac{2km}{-c+\sqrt{c^2+4crk}},~~~y_1=\frac{-c+\sqrt{c^2+4crk}}{2ck}.$$
Consider the Jacobian matrix of system \eqref{eq1.1}
\begin{equation*}
J=\left(\begin{array}{cc}
\frac{r}{1+ky}-cy & -\frac{rkx}{(1+ky)^2}-cx\\
cy^2 & 2cxy-m\\
\end{array}
\right).
\end{equation*}
The Jacobian matrix at $E_0(0,0)$ is
\begin{equation*}
J(E_0)=\left(\begin{array}{cc}
r & 0\\
0 & -m\\
\end{array}
\right).
\end{equation*}
It is easy to see that $$\lambda_1(E_0)=r>0,~~~\lambda_2(E_0)=-m<0,$$
thus, $E_0$ is a saddle.
The Jacobian matrix at $E_1(x_1,y_1)$ is
\begin{equation*}
J(E_1)=\left(\begin{array}{cc}
0 & -\frac{rkx_1}{(1+ky_1)^2}-cx_1\\
cy_1^2 & m\\
\end{array}
\right).
\end{equation*}
According to the relationship between the matrix and its corresponding eigenvalues, it can be known
$$\lambda_1(E_1)+\lambda_2(E_1)=tr(J(E_1))=m>0,$$
$$\lambda_1(E_1)\cdot\lambda_2(E_1)=det J(E_1)=cy_1^2\left(\frac{rkx_1}{(1+ky_1)^2}cx_1\right)>0,$$
and then, we have $$Re(\lambda_1(E_1))>0,~~~Re(\lambda_2(E_1))>0.$$
Thus, $E_1$ is an unstable node or focus.
\end{proof}

\begin{Theorem} {\label{Theorem 2.2}} System \eqref{eq1.1} has no limit cycle in the first quadrant.
\end{Theorem}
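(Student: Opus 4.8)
The plan is to apply the Bendixson--Dulac criterion. Writing system \eqref{eq1.1} as $\dot x = P(x,y)$ and $\dot y = Q(x,y)$ with
$$P(x,y)=\frac{rx}{1+ky}-cxy, \qquad Q(x,y)=cxy^2-my,$$
I would look for a Dulac multiplier $B(x,y)$ that is $C^1$ on the open first quadrant $\Omega=\{(x,y):x>0,\,y>0\}$ and for which the quantity $\partial_x(BP)+\partial_y(BQ)$ keeps a fixed sign throughout $\Omega$. Since $\Omega$ is simply connected, the existence of such a $B$ forbids any closed orbit of \eqref{eq1.1} lying entirely in $\Omega$, and in particular rules out a limit cycle in the first quadrant.

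Motivated by the multiplicative structure of both components (every term of $P$ and $Q$ carries a factor of $x$ or of $y$), the natural candidate is $B(x,y)=\dfrac{1}{xy}$, which is smooth and strictly positive on $\Omega$. With this choice $BP=\dfrac{r}{y(1+ky)}-c$ depends on $y$ alone, so that $\partial_x(BP)=0$; and $BQ=cy-\dfrac{m}{x}$, so that $\partial_y(BQ)=c$. Hence
$$\frac{\partial (BP)}{\partial x}+\frac{\partial (BQ)}{\partial y}=c>0$$
identically on $\Omega$.

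Because this divergence is strictly positive (in particular of constant sign and nowhere zero) on the simply connected region $\Omega$, the Bendixson--Dulac theorem immediately yields the absence of any periodic orbit contained in the open first quadrant, which is the claim. The only step that is not a routine computation is the selection of the Dulac function itself; here the form $1/(xy)$ is essentially forced, since dividing by $xy$ is precisely what strips the $x$-dependence out of $BP$ and leaves a constant derivative in $BQ$. Once this guess is made, the verification reduces to the two elementary differentiations above, so I do not expect any genuine obstacle in carrying out the argument.
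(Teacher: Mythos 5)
Your proposal is correct and follows essentially the same route as the paper: the identical Dulac function $B(x,y)=\frac{1}{xy}$, the same computation giving $\frac{\partial(BP)}{\partial x}+\frac{\partial(BQ)}{\partial y}=c>0$ on the first quadrant, and the same appeal to the Bendixson--Dulac theorem. Your explicit remark that the first quadrant is simply connected is a small point of care that the paper leaves implicit.
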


\begin{proof}
Set $$P(x,y)=\frac{rx}{1+ky}-cxy,~~~Q(x,y)=cxy^2-my.$$
Choosing Dulac functon $$B(x,y)=\frac{1}{xy}.$$
Functions $P, Q, B$ are continuously differentiable in the first quadrant, and
\begin{equation*}
\begin{split}
\frac{\partial(BP)}{\partial x}+\frac{\partial(BQ)}{\partial y}
=\frac{\partial}{\partial x}\left[\frac{1}{y(1+ky)}-c\right]+\frac{\partial}{\partial y}\left(cy-\frac{m}{x}\right)=c>0.
\end{split}
\end{equation*}
It follows from Bendixon-Dulac Theorem that there is no limit cycle in the first quadrant.
\end{proof}

%从上面的分析可知，系统在任何时刻和任何条件下，都不会趋向于一个稳定的状态，从生物学角度来看，这是非常糟糕的，害虫的日益增长会对环境和经济造成破坏。因此采取必要的方法改善这一局面是很有必要的。下面我们讨论连连续释放线虫条件下的模型的动力学行为。

From the above analysis, it can be seen that system \eqref{eq1.1} will not approach a stable state at any time and under any conditions. This is really bad from a biological standpoint. The increasing number of pests will cause damage to the environment and the economy. Therefore, it is necessary to take necessary measures to improve this situation. Below we discuss dynamic behavior of model \eqref{eq1.2} under the condition of continuous release of nematodes.

\section{\Large{Dynamic behavior of system \eqref{eq1.2}}}

\par\noindent

At first, we nondimensionalise system \eqref{eq1.2} by writing
$$\bar{y}=\frac{c}{r}y,~~~\tau=rt,$$
then system \eqref{eq1.2} can be turned into
\begin{equation}\label{eq3.1}
\left\{\begin{aligned}
&\frac{dx}{d\tau}=\frac{x}{1+\frac{kr}{c}\bar{y}}-x\bar{y},\\
&\frac{d\bar{y}}{d\tau}=x\bar{y}^2-\frac{m}{r}\bar{y}+\frac{cu}{r^2}.
\end{aligned}\right.
\end{equation}
Taking $$\bar{k}=\frac{kr}{c},~~~\bar{m}=\frac{m}{r},~~~\bar{u}=\frac{cu}{r^2},$$
and still replace $\bar{y},\tau,\bar{k},\bar{m},\bar{u}$ with the original variable $y,t,k,m,u$, and then, system \eqref{eq3.1} becomes
\begin{equation}\label{eq3.2}
\left\{\begin{aligned}
&\frac{dx}{dt}=\frac{x}{1+ky}-xy,\\
&\frac{dy}{dt}=xy^2-my+u.
\end{aligned}\right.
\end{equation}

\begin{Theorem} {\label{Theorem 3.1}} System \eqref{eq3.2} always has a boundary equilibrium $E_2(0,y_2)$. In addition,

$(i)$ if $u<u_0$, then system \eqref{eq3.2} has a positive equilibrium $E_3(x_3,y_3)$;

$(ii)$ if $u\geq u_0$, then system \eqref{eq3.2} has no other equilibrium except $E_2$.
\end{Theorem}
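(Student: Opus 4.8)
The plan is to solve the algebraic system obtained by setting both right-hand sides of \eqref{eq3.2} equal to zero and then to classify the solutions according to the sign of the $x$-coordinate. First I would write the equilibrium conditions as
\begin{equation*}
x\left(\frac{1}{1+ky}-y\right)=0, \qquad xy^2-my+u=0,
\end{equation*}
and exploit the factorisation in the first equation: either $x=0$, or $y$ satisfies $\frac{1}{1+ky}=y$, that is, $ky^2+y-1=0$. These two alternatives give the boundary branch and the interior branch, which I would treat separately.

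For the boundary branch $x=0$, the second equation collapses to $-my+u=0$, forcing $y=u/m$. Since $m,u>0$ this value is positive and uniquely determined, so $E_2(0,u/m)$ exists unconditionally; I would set $y_2=u/m$. This disposes of the first sentence of the theorem and, importantly, shows that the \emph{only} equilibrium with $x=0$ is $E_2$, which is what part (ii) ultimately needs.

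For the interior branch, I would note that the quadratic $ky^2+y-1=0$ has constant term $-1<0$, so (by Vieta, the product of its roots being $-1/k<0$) its two roots have opposite signs and there is exactly one admissible positive root
\begin{equation*}
y_3=\frac{-1+\sqrt{1+4k}}{2k}>0.
\end{equation*}
This is the only possible $y$-coordinate of an equilibrium with $x\neq0$. Substituting $y=y_3$ into $xy^2-my+u=0$ yields $x_3=(my_3-u)/y_3^2$, so the existence of a positive equilibrium is governed entirely by the sign of $my_3-u$. Setting $u_0:=my_3=m(\sqrt{1+4k}-1)/(2k)$, part (i) follows because $u<u_0$ gives $x_3>0$, making $E_3(x_3,y_3)$ a genuine positive equilibrium, while part (ii) follows because $u\geq u_0$ gives $x_3\leq0$, so the interior branch contributes no point in the open first quadrant.

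This argument is largely a routine computation, so the main obstacle is modest rather than deep: it lies in being precise about \emph{uniqueness and completeness}, namely that the quadratic supplies exactly one usable root $y_3$ and that no equilibria are overlooked. A point worth flagging is the borderline case $u=u_0$, where $x_3=0$ and simultaneously $y_3=u/m=y_2$, so the interior equilibrium does not disappear arbitrarily but merges into $E_2$ (a transcritical collision). Handling this coincidence correctly is what makes the dichotomy in the statement exhaustive, and it is the one place where I would take care to phrase ``no other equilibrium except $E_2$'' so as to include the degenerate endpoint $u=u_0$.
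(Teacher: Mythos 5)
Your proposal is correct and follows essentially the same route as the paper's proof: factor the first equilibrium equation into the boundary branch $x=0$ (giving $E_2(0,u/m)$) and the interior branch $\frac{1}{1+ky}=y$ (giving the unique positive root $y_3=\frac{\sqrt{1+4k}-1}{2k}$ and then $x_3=\frac{my_3-u}{y_3^2}$), with the sign of $my_3-u$ deciding existence of $E_3$. Your additional care about uniqueness of the positive root via Vieta and about the coincidence $E_3=E_2$ at $u=u_0$ merely makes explicit details the paper leaves implicit.
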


\begin{proof}
The equilibria of system \eqref{eq3.2} satisfy
\begin{equation}\label{eq3.2a}
\left\{\begin{aligned}
&\frac{x}{1+ky}-xy=0,\\
&xy^2-my+u=0.
\end{aligned}\right.
\end{equation}
By calculations, the above system has a fixed solution
$$x_2=0,~~y_2=\frac{u}{m}.$$
In addition, from $\frac{1}{1+ky}-y=0$, we have
$$y_3=\frac{\sqrt{1+4k}-1}{2k},$$and then, it follows from the second equation of system \eqref{eq3.2a} that
$$x_3=\frac{my_3-u}{y_3^2}.$$
It can be seen from non-negativity of the equilibria that
$x_3>0$, i.e., $u<my_3:=u_0$.
\end{proof}

\begin{Theorem} {\label{Theorem 3.2}} For equilibrium $E_2(0,y_2)$,

$(i)$ if $u<u_0$, then $E_2$ is a saddle;

$(ii)$ if $u>u_0$, then $E_2$ is a stable node;

$(iii)$ if $u=u_0$, then $E_2$ is an attracting saddle node.
\end{Theorem}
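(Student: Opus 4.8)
The plan is to read the local type of $E_2$ straight off the linearization in the hyperbolic cases (i) and (ii), and to fall back on a center-manifold reduction in the degenerate case (iii). First I would evaluate the Jacobian of \eqref{eq3.2} at $E_2=(0,y_2)$ with $y_2=u/m$. Because $x_2=0$, the $(1,2)$-entry $-x\bigl(k/(1+ky)^2+1\bigr)$ vanishes, so $J(E_2)$ is lower triangular and its eigenvalues are just the diagonal entries $\lambda_1=\tfrac{1}{1+ky_2}-y_2$ and $\lambda_2=-m<0$.

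Next I would introduce $h(y)=\tfrac{1}{1+ky}-y$ and record that $h$ is strictly decreasing on $[0,\infty)$ with its unique positive zero at $y_3=\tfrac{\sqrt{1+4k}-1}{2k}$, the value found in the proof of Theorem 3.1, so that $u_0=my_3$. Since $\lambda_1=h(y_2)$ and $y_2=u/m$, the inequality $u\lessgtr u_0$ is equivalent to $y_2\lessgtr y_3$, hence to $\lambda_1\gtrless 0$. Thus for $u<u_0$ one has $\lambda_1>0>\lambda_2$, eigenvalues of opposite sign, so $E_2$ is a saddle (case (i)); for $u>u_0$ one has $\lambda_1<0$ and $\lambda_2<0$, so $E_2$ is a stable node (case (ii)). Both follow at once from the Hartman--Grobman classification of hyperbolic equilibria.

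The delicate case is (iii), where $u=u_0$ forces $y_2=y_3=:y^*$ with $\tfrac{1}{1+ky^*}=y^*$, so $\lambda_1=0$ and $E_2$ is non-hyperbolic. Here I would translate $E_2$ to the origin and diagonalize $J(E_2)=\bigl(\begin{smallmatrix}0&0\\(y^*)^2&-m\end{smallmatrix}\bigr)$: the zero eigenvalue has eigenvector $(m,(y^*)^2)$ and the $-m$ eigenvalue has eigenvector $(0,1)$, so setting $\xi=x/m$ and $\eta=-\tfrac{(y^*)^2}{m}x+(y-y^*)$ puts the system in the standard form $\dot\xi=f(\xi,\eta)$, $\dot\eta=-m\eta+g(\xi,\eta)$ with $f,g=O(\|(\xi,\eta)\|^2)$. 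I would then invoke the center-manifold reduction: the center manifold is $\eta=\phi(\xi)=O(\xi^2)$, and its quadratic correction cannot affect the quadratic term of the reduced equation, so the leading behaviour of $\dot\xi$ on the center manifold is obtained by setting $\eta=0$. Writing $P=x\,h(y)$ and substituting $x=m\xi$, $y-y^*=(y^*)^2\xi$ gives $\dot\xi=h'(y^*)(y^*)^2\,\xi^2+O(\xi^3)$, where $h'(y^*)=-k(y^*)^2-1<0$. Because the quadratic coefficient $\beta=h'(y^*)(y^*)^2$ is nonzero and of even degree, the classical criterion for an isolated equilibrium with one zero eigenvalue identifies $E_2$ as a saddle-node; and since the transverse eigenvalue $\lambda_2=-m<0$ renders the node sector stable, it is an attracting saddle-node.

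The main obstacle I anticipate is case (iii): one must confirm that $\beta$ is genuinely nonzero, so that the reduced dynamics is $\sim\beta\xi^2$ rather than of higher order, and must expand the non-polynomial nonlinearity $1/(1+ky)$ correctly. The computation above shows this is painless once one observes that $\beta$ depends only on $h'(y^*)$, which is strictly negative; the even-degree leading term then guarantees the saddle-node structure with no need to compute the full center-manifold approximation $\phi$.
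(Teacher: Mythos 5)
Your proposal is correct, and for parts (i) and (ii) it is identical to the paper's argument: the Jacobian at $E_2$ is triangular, $\lambda_2=-m<0$, and the sign of $\lambda_1=\frac{1}{1+ky_2}-y_2$ flips at $u=u_0$ (your explicit monotonicity remark about $h(y)=\frac{1}{1+ky}-y$ is a small tidiness the paper omits). For part (iii) the two arguments are, at bottom, the same reduction, but your execution differs in useful ways. Your eigen-coordinates $(\xi,\eta)=\bigl(x/m,\,(y-y_2)-\tfrac{y_2^2}{m}x\bigr)$ agree up to scaling with the paper's transformation $(\tilde X,\tilde Y)=(X,\,X-\tfrac{m}{y_2^2}Y)$, and your reduced equation $\dot\xi=h'(y_2)y_2^2\xi^2+O(\xi^3)$ with $h'(y_2)=-(ky_2^2+1)$ reproduces exactly the paper's $\frac{d\tilde X}{dt}=-\tfrac{y_2^2(ky_2^2+1)}{m}\tilde X^2+\cdots$ once the paper's time reversal $\tau=-mt$ is undone; both conclude saddle-node from the nonvanishing quadratic coefficient, with attraction coming from $\lambda_2=-m<0$. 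What your route buys: (a) exploiting the factorization $\dot x=x\,h(y)$ lets you read off the quadratic coefficient directly from $h'(y_2)$, avoiding the paper's full third-order Taylor expansion and the bookkeeping of transformed cubic terms; (b) you work in forward time with the center-manifold theorem rather than rescaling by $\tau=-mt$ to fit the normal form of the cited Theorem 7.1 of Zhang et al., which removes one sign-tracking step; (c) you explicitly justify why substituting $\eta=0$ captures the quadratic term (the correction $\phi(\xi)=O(\xi^2)$ enters only at cubic order because the nonlinearity has no linear part), whereas the paper asserts the implicit function is $\phi(\tilde X)\equiv 0$, which is literally false (it is $O(\tilde X^2)$) though harmless for the coefficient being computed; on this point your write-up is actually more careful than the paper's.
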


\begin{proof}
The Jacobian matrix of system \eqref{eq3.2} is
\begin{equation*}
J=\left(\begin{array}{cc}
\frac{1}{1+ky}-y & -\frac{kx}{(1+ky)^2}-x\\
y^2 & 2xy-m\\
\end{array}
\right).
\end{equation*}
Thus, at $E_2(0,y_2)$,
\begin{equation*}
J(E_2)=\left(\begin{array}{cc}
\frac{1}{1+ky_2}-y_2 & 0\\
y_2^2 & -m\\
\end{array}
\right).
\end{equation*}
The eigenvalues of $J(E_2)$ are $$\lambda_1(J(E_2))=\frac{1}{1+ky_2}-y_2,~~~\lambda_2(J(E_2))=-m<0.$$

$(i)$ If $y_2<y_3$, i.e., $u<u_0$, $\lambda_1(J(E_2))=\frac{1}{1+ky_2}-y_2>0$,  $E_2$ is a saddle;

$(ii)$ if $y_2>y_3$, i.e., $u>u_0$, $\lambda_1(J(E_2))=\frac{1}{1+ky_2}-y_2<0$, $E_2$ is a stable node;

$(iii)$ if $y_2=y_3$, i.e., $u=u_0$, in where $E_2$ and $E_3$ coincide as a point, at this time, $$\lambda_1(J(E_2))=\frac{1}{1+ky_2}-y_2=0.$$ In order to recognize the type and stability of $E_2$, at first, translating $E_2$ to the origin by transformation $(X,Y)=(x,y-y_2)$, and performing Taylor expansion of system \eqref{eq3.2} at the origin to the third order, and noticing that
$$\frac{1}{1+ky_2}-y_2=0,~~~my_2-u=0.$$ Thus, we have
\begin{equation}\label{eq3.3}
\left\{\begin{aligned}
&\frac{dX}{dt}=-(ky_2^2+1)XY+k^2y_2^3XY^2+o(|X,Y|^4),\\
&\frac{dY}{dt}=y_2^2X-mY+2y_2XY+XY^2.
\end{aligned}\right.
\end{equation}
Taking transformation $(\tilde{X},\tilde{Y})=(X,X-\frac{m}{y_2^2}Y)$, system \eqref{eq3.3} gives
\begin{equation}\label{eq3.4}
\left\{\begin{aligned}
\frac{d\tilde{X}}{dt}=&-\frac{y_2^2(ky_2^2+1)}{m}\tilde{X}^2+\frac{y_2^2(ky_2^2+1)}{m}\tilde{X}\tilde{Y}+\frac{k^2y_2^7}{m^2}\tilde{X}^3-\frac{2k^2y_2^7}{m^2}\tilde{X}^2\tilde{Y}+\frac{k^2y_2^7}{m^2}\tilde{X}\tilde{Y}^2+o(|X,Y|^4),\\
\frac{d\tilde{Y}}{dt}=&-m\tilde{Y}-\left(\frac{y_2^2(ky_2^2+1)}{m}+2y_2\right)\tilde{X}^2+\left(\frac{y_2^2(ky_2^2+1)}{m}+2y_2\right)\tilde{X}\tilde{Y}+\left(\frac{k^2y_2^7}{m^2}-\frac{y_2^2}{m}\right)\tilde{X}^3\\
&-\left(\frac{2k^2y_2^7}{m^2}-\frac{2y_2^2}{m}\right)\tilde{X}^2\tilde{Y}+\left(\frac{k^2y_2^7}{m^2}-\frac{y_2^2}{m}\right)\tilde{X}\tilde{Y}^2+o(|X,Y|^4).
\end{aligned}\right.
\end{equation}
Now we apply time rescaling $\tau=-mt$, and system \eqref{eq3.4} transformed into the standard form
\begin{equation}\label{eq3.4}
\left\{\begin{aligned}
\frac{d\tilde{X}}{d\tau}=&\frac{y_2^2(ky_2^2+1)}{m^2}\tilde{X}^2-\frac{y_2^2(ky_2^2+1)}{m^2}\tilde{X}\tilde{Y}-\frac{k^2y_2^7}{m^3}\tilde{X}^3+\frac{2k^2y_2^7}{m^3}\tilde{X}^2\tilde{Y}-\frac{k^2y_2^7}{m^3}\tilde{X}\tilde{Y}^2+o(|X,Y|^4),\\
\frac{d\tilde{Y}}{d\tau}=&\tilde{Y}+\left(\frac{y_2^2(ky_2^2+1)}{m^2}+\frac{2y_2}{m}\right)\tilde{X}^2-\left(\frac{y_2^2(ky_2^2+1)}{m^2}+\frac{2y_2}{m}\right)\tilde{X}\tilde{Y}-\left(\frac{k^2y_2^7}{m^3}-\frac{y_2^2}{m^2}\right)\tilde{X}^3\\
&+\left(\frac{2k^2y_2^7}{m^3}-\frac{2y_2^2}{m^2}\right)\tilde{X}^2\tilde{Y}-\left(\frac{k^2y_2^7}{m^3}-\frac{y_2^2}{m^2}\right)\tilde{X}\tilde{Y}^2+o(|X,Y|^4).
\end{aligned}\right.
\end{equation}
From $\frac{d\tilde{Y}}{d\tau}=0$, we have implicit function $\tilde{Y}=\phi(\tilde{X})=0$, then
\begin{equation*}
\frac{d\tilde{X}}{d\tau}=\frac{y_2^2(ky_2^2+1)}{m^2}\tilde{X}^2-\frac{k^2y_2^7}{m^3}\tilde{X}^3+o(|X|^4).
\end{equation*}
In view of $\frac{y_2^2(ky_2^2+1)}{m^2}>0$, $E_2$ is an attracting saddle node, which can be obtained from \cite{Zhang}[Theorem 7.1], and this theorem is a powerful tool to study the bifurcation of planar system which has been applied to many models in real world \cite{ChenLJ,Guan,HuangJC1,HuangJC2,Song1,Song2,Wei,Wei1}.
\end{proof}

\begin{Theorem} {\label{Theorem 3.3}} For equilibrium $E_3(x_3,y_3)$,

$(i)$ if $u<\frac{u_0}{2}$, then $E_3$ is an unstable focus;

$(ii)$ if $\frac{u_0}{2}<u<u_0$, then $E_3$ is a stable focus;

$(iii)$ if $u=\frac{u_0}{2}$, then $E_3$ is a center type stable focus, and at the moment, system \eqref{eq3.2} undergoes a Hopf bifurcation.
\end{Theorem}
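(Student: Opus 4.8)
The plan is to read off the local behaviour of $E_3$ from the trace and determinant of the Jacobian, and then treat the degenerate case $u=u_0/2$ as a Hopf point. First I would evaluate the matrix $J$ of Theorem~\ref{Theorem 3.2} at $E_3$, exploiting the defining relation $\frac{1}{1+ky_3}=y_3$ (equivalently $1+ky_3=1/y_3$). This makes the $(1,1)$ entry vanish and turns the $(1,2)$ entry into $-x_3(ky_3^2+1)$, so that
\begin{equation*}
J(E_3)=\left(\begin{array}{cc}
0 & -x_3(ky_3^2+1)\\
y_3^2 & 2x_3y_3-m
\end{array}\right).
\end{equation*}
Using $x_3=(my_3-u)/y_3^2$ one finds $\det J(E_3)=(my_3-u)(ky_3^2+1)>0$ (positive because $u<u_0=my_3$) and $\operatorname{tr}J(E_3)=2x_3y_3-m=(my_3-2u)/y_3$.

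Since the determinant is strictly positive, the two eigenvalues never have opposite signs, and the sign of the real part is governed entirely by the trace: $\operatorname{tr}J(E_3)>0$ iff $u<u_0/2$, $\operatorname{tr}J(E_3)<0$ iff $u>u_0/2$, and $\operatorname{tr}J(E_3)=0$ iff $u=u_0/2$. To distinguish a focus from a node I would inspect the discriminant $\Delta=(\operatorname{tr}J(E_3))^2-4\det J(E_3)$; near $u=u_0/2$ the trace is small while the determinant stays bounded away from $0$, so $\Delta<0$ and $E_3$ is indeed a focus. This immediately gives $(i)$ (unstable focus) and $(ii)$ (stable focus). For $(iii)$, at $u=u_0/2$ the eigenvalues are the purely imaginary pair $\pm i\omega$ with $\omega=\sqrt{\det J(E_3)}>0$. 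To confirm a genuine Hopf bifurcation I would check transversality: near $u_0/2$ the eigenvalues are complex with real part $\tfrac12\operatorname{tr}J(E_3)$, and since $y_3$ is independent of $u$, differentiation yields
\begin{equation*}
\frac{d}{du}\operatorname{Re}\lambda\Big|_{u=u_0/2}=\tfrac12\frac{d}{du}\operatorname{tr}J(E_3)=-\frac{1}{y_3}\neq 0,
\end{equation*}
so the eigenvalues cross the imaginary axis with nonzero speed.

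The remaining and most delicate task is to decide the stability of $E_3$ at criticality and the direction of the bifurcating cycle, i.e.\ to justify the phrase ``center type stable focus''. For this I would translate $E_3$ to the origin and apply a linear change of variables carrying $J(E_3)$ into the canonical rotation form $\left(\begin{smallmatrix} 0 & -\omega\\ \omega & 0\end{smallmatrix}\right)$. After expanding both components to third order in the new coordinates, I would evaluate the first Lyapunov (focal) coefficient via the standard planar formula. Its sign decides everything: a negative first Lyapunov coefficient makes the origin a weak stable focus at $u=u_0/2$ and forces the bifurcation to be supercritical, producing a stable limit cycle on the side $u<u_0/2$ where the equilibrium is unstable.

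The main obstacle is exactly this last computation. The inhibition term $1/(1+ky)$ and the quadratic predation $xy^2$ generate many nonzero second- and third-order Taylor coefficients, so the algebra feeding the Lyapunov formula is heavy; I would organize it by substituting $1+ky_3=1/y_3$ and $x_3=(my_3-u)/y_3^2$ everywhere to keep the expressions compact, and verify at the end that the resulting focal value is negative, which is what yields the stable (center type) focus asserted in $(iii)$.
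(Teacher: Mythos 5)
Your plan follows the same route as the paper's own proof: use the relation $1+ky_3=1/y_3$ to put $J(E_3)$ in the form with vanishing $(1,1)$ entry, read off $(i)$ and $(ii)$ from the sign of $\operatorname{tr}J(E_3)=(my_3-2u)/y_3$ together with $\det J(E_3)>0$, check transversality via $\alpha'(u)=-1/y_3<0$, and settle the critical case $u=u_0/2$ by translating $E_3$ to the origin, normalizing the linear part to a rotation, and computing the focal (first Lyapunov) value. Your trace, determinant and transversality computations coincide with the paper's. However, the proposal stops short at exactly the step that constitutes the bulk of the paper's proof: you never carry out the focal-value computation, you only promise to ``verify at the end that the resulting focal value is negative.'' Since the entire content of $(iii)$ --- that $E_3$ is a center-type \emph{stable} focus rather than a center or an unstable weak focus --- rests on that sign, part $(iii)$ is left unproven as written. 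For the record, the computation does close: after the third-order expansion and the scaling $(\bar X,\bar Y)=\left(\tfrac{y_3}{\omega}X,\,Y\right)$ with $\omega=\sqrt{x_3(ky_3^2+1)}$, the paper finds the focal value $-\tfrac{\pi}{2\omega}\left(\tfrac{k^2}{\omega^2}x_3^2y_3^2+\tfrac{x_3}{y_3^2}\right)$, which is manifestly negative after a cancellation; so your approach would succeed, but the decisive algebra is missing from the proposal.

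Two further points. First, your discriminant argument shows $E_3$ is a focus only for $u$ \emph{near} $u_0/2$ (where the trace is small), yet you then claim $(i)$ and $(ii)$ follow ``immediately'' on the full ranges $u<u_0/2$ and $\tfrac{u_0}{2}<u<u_0$. That inference is a gap: away from $u_0/2$ one must check $\operatorname{tr}^2<4\det$, i.e.\ $(my_3-2u)^2/y_3^2<4(my_3-u)(ky_3^2+1)$, and this inequality can fail (for instance, with $m$ large and $u$ small both eigenvalues are real and positive, so $E_3$ is an unstable node, not a focus). The paper elides the same issue by simply asserting complex conjugate eigenvalues in cases $(i)$ and $(ii)$, so you are no worse off than the authors, but your write-up makes the overreach visible. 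Second, you call the bifurcation supercritical while the paper, in Theorem~\ref{Theorem 3.4}, calls it subcritical; this is a terminology clash rather than a mathematical disagreement --- both you and the paper assert a stable (orbitally asymptotically stable) limit cycle existing on the side $u<u_0/2$ where $E_3$ is unstable; the paper names the bifurcation by the side of the critical parameter value on which the cycle appears, whereas you name it by the sign of the Lyapunov coefficient.
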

\begin{proof}
The Jacobian matrix of system \eqref{eq3.2} in $E_3$ is
\begin{equation}\label{eq3.4a}
J(E_3)=\left(\begin{array}{cc}
0 & -kx_3y_3^2-x_3\\
y_3^2 & 2x_3y_3-m\\
\end{array}
\right).
\end{equation}
According to the relationship between the matrix and its corresponding eigenvalues, we have
$$\lambda_1(E_3)+\lambda_2(E_3)=tr(J(E_3))=2x_3y_3-m,$$
$$\lambda_1(E_3)\cdot\lambda_2(E_3)=det J(E_3)=y_3^2(kx_3y_3^2+x_3)>0.$$

We first prove $(iii)$, $u=\frac{u_0}{2}$, i.e., $y_3=\frac{2u}{m}$, in this state, $$\lambda_1(E_3)+\lambda_2(E_3)=0.$$
Translating $E_3$ to the origin by transformation $(X,Y)=(x-x_3,y-y_3)$, and performing Taylor expansion of system \eqref{eq3.2} at the origin to the third order, and noticing that
$2x_3y_3=m$. Thus, we have
\begin{equation}\label{eq3.5}
\left\{\begin{aligned}
\frac{dX}{dt}=&-x_3(ky_3^2+1)Y-(ky_3^2+1)XY+k^2x_3y_3^3Y^2+k^2y_3^3XY^2-k^3x_3y_3^4Y^3+o(|X,Y|^4),\\
\frac{dY}{dt}=&y_3^2X+2y_3XY+x_3Y^2+XY^2.
\end{aligned}\right.
\end{equation}
Noting $\omega:=\sqrt{x_3(ky_3^2+1)}$, and taking transformation $(\bar{X},\bar{Y})=\left(\frac{y_3}{\omega}X,Y\right)$, system \eqref{eq3.5} gives
\begin{equation}\label{eq3.6}
\left\{\begin{aligned}
\frac{d\bar{X}}{dt}=&-y_3\omega\bar{Y}-(ky_3^2+1)\bar{X}\bar{Y}+\frac{k^2}{\omega}x_3y_3^4\bar{Y}^2+k^2y_3^3\bar{X}\bar{Y}^2-\frac{k^3}{\omega}x_3y_3^5\bar{Y}^3+o(|\bar{X},\bar{Y}|^4),\\
\frac{d\bar{Y}}{dt}=&y_3\omega\bar{X}+2\omega\bar{X}\bar{Y}+x_3\bar{Y}^2+\frac{\omega}{y_3}\bar{X}\bar{Y}^2.
\end{aligned}\right.
\end{equation}
Replacing the coefficients of $\bar{X}^i\bar{Y}^j$ $(i,j=0,1,2,3)$ in $\frac{d\bar{X}}{dt}$ and $\frac{d\bar{Y}}{dt}$ with $A_{ij}$ and $B_{ij}$ respectively, then system \eqref{eq3.6} becomes
\begin{equation}\label{eq3.7}
\left\{\begin{aligned}
&\frac{d\bar{X}}{dt}=-y_3\omega\bar{Y}+A_{11}\bar{X}\bar{Y}+A_{02}\bar{Y}^2+A_{12}\bar{X}\bar{Y}^2+A_{03}\bar{Y}^3+o(|\bar{X},\bar{Y}|^4),\\
&\frac{d\bar{Y}}{dt}=y_3\omega\bar{X}+B_{11}\bar{X}\bar{Y}+B_{02}\bar{Y}^2+B_{12}\bar{X}\bar{Y}^2.
\end{aligned}\right.
\end{equation}
According to the calculation method of the third focus value, we obtain the third focus value of system \eqref{eq3.7} at the origin
\begin{equation*}
\begin{split}
&\frac{\pi}{4y_3\omega}A_{12}-\frac{\pi}{4(y_3\omega)^2}(2A_{02}B_{02}-A_{11}A_{02}+B_{11}B_{02})\\
=&\frac{\pi}{4y_3\omega}\times k^2y_3^3-\frac{\pi}{4(y_3\omega)^2}\left[\frac{2k^2}{\omega}x_3^2y_3^4+\frac{k^2}{\omega}x_3y_3^4(ky_3^2+1)+2\omega x_3\right]\\
=&-\frac{\pi}{2\omega}\left(\frac{k^2}{\omega^2}x_3^2y_3^2+\frac{x_3}{y_3^2}\right)<0,
\end{split}
\end{equation*}
which implies $E_3$ is a center type stable focus \cite{J}[Chapters 2.3 and 7.1]. In this state, the eigenvalues of its Jacobian matrix $J(E_3)$ are a pair of conjugate pure virtual eigenvalues $\lambda_{1,2}=\pm iy_3\omega$. When $u$ changes near $\frac{u_0}{2}$, $J(E_3)$ has a pair of conjugate complex eigenvalues $\lambda_{1,2}=\alpha(u)\pm i\beta(u)$, where
$$\alpha(u)=\frac{1}{2}tr(J(E_3))=\frac{1}{2}(2x_3y_3-m)=\frac{m}{2}-\frac{u}{y_3},~~~
\beta(u)=\sqrt{det J(E_3)-\alpha^2(u)}.$$
Since $\alpha'(u)\mid_{u=\frac{u_0}{2}}=-\frac{1}{y_3}<0$, the transversality condition holds, it follows from Poincar\'e-Andronov-Hopf bifurcation theory \cite{Wiggins}[Theorem 3.1.3] that system \eqref{eq3.2} undergoes a Hopf bifurcation in $E_3$ when $u=\frac{u_0}{2}$;

$(i)$ if $y_3>\frac{2u}{m}$, i.e., $u<\frac{u_0}{2}$, $\lambda_1(E_3)+\lambda_2(E_3)>0$, $J(E_3)$ has a pair of conjugate complex eigenvalues, and
the real part is greater than $0$, then $E_3$ is an unstable focus;

$(ii)$ if $\frac{u}{m}<y_3<\frac{2u}{m}$, i.e., $\frac{u_0}{2}<u<u_0$, $\lambda_1(E_3)+\lambda_2(E_3)<0$, $J(E_3)$ has a pair of conjugate complex eigenvalues, and
the real part is less than $0$, then $E_3$ is a stable focus.
\end{proof}

\begin{center}
 \makeatletter\def\@captype{table}\makeatother
 \newcommand{\tabincell}[2]{\begin{tabular}{@{}#1@{}}#2\end{tabular}}
 \caption{ Equilibria and their stability in system \eqref{eq3.2}}
 \small
   \label{TableAA}
  \centering
  \renewcommand\arraystretch{1.3}
  \setlength{\tabcolsep}{1mm}{
  \begin{tabular}{lll}
\hline
Equilibrium        & \hspace{4em}Existence &  \hspace{4em}Type \\
\cline{1-3}
 $E_2(0,y_2)$      & \hspace{4em}Always exists &  \hspace{4em}\tabincell{l}{
 $u<u_0$, saddle \\$u>u_0$, stable node \\$u=u_0$, attracting saddle node } \\
 \cline{1-3}
 $E_3(x_3,y_3)$      & \hspace{4em}$u<u_0$  &  \hspace{4em}\tabincell{l}{$u<\frac{u_0}{2}$, unstable focus  \\$\frac{u_0}{2}<u<u_0$, stable focus \\ $u=\frac{u_0}{2}$, center type stable focus} \\
\hline
\end{tabular}}
\end{center}

\begin{Theorem} {\label{Theorem 3.4}} System \eqref{eq3.2} undergoes a Hopf bifurcation at $E_3$ when $u=\frac{u_0}{2}$, furthermore, the Hopf bifurcation is subcritical, and bifurcation periodic solution is orbitally asymptotic stable.
\end{Theorem}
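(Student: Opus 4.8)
The plan is to deduce both the direction and the stability of the bifurcating cycle directly from the two data already produced in the proof of Theorem~\ref{Theorem 3.3}, without carrying out a fresh center-manifold reduction. At $u=\frac{u_0}{2}$ the linearization $J(E_3)$ carries the purely imaginary pair $\lambda_{1,2}=\pm i y_3\omega$ with $\omega=\sqrt{x_3(ky_3^2+1)}$; the transversality condition holds since
$$\alpha'(u)\big|_{u=u_0/2}=-\frac{1}{y_3}<0;$$
and the third focus value at the origin was found to be
$$\ell_1:=-\frac{\pi}{2\omega}\left(\frac{k^2}{\omega^2}x_3^2y_3^2+\frac{x_3}{y_3^2}\right)<0.$$
These are exactly the ingredients that the Poincar\'e--Andronov--Hopf theorem, in the Hassard--Kazarinoff--Wan normalization, converts into statements about direction and stability.

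First I would set $\mu:=u-\frac{u_0}{2}$ so that bifurcation occurs at $\mu=0$, identify the focus value $\ell_1$ with $\operatorname{Re}(c_1(0))$ up to a positive normalizing factor (so the two share the same sign), and form the standard pair
$$\beta_2=2\operatorname{Re}(c_1(0)),\qquad \mu_2=-\frac{\operatorname{Re}(c_1(0))}{\alpha'(u_0/2)}.$$
By the theory, $\beta_2$ controls stability and $\mu_2$ controls direction: $\beta_2<0$ gives an orbitally asymptotically stable cycle, while $\mu_2<0$ labels the bifurcation subcritical with the cycle existing for $\mu<0$. Since $\operatorname{Re}(c_1(0))<0$, I get at once $\beta_2<0$, which delivers the orbital asymptotic stability asserted in the theorem.

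For the direction the decisive feature is that two negative quantities appear in the quotient: with $\operatorname{Re}(c_1(0))<0$ and $\alpha'(u_0/2)=-\tfrac{1}{y_3}<0$,
$$\mu_2=-\frac{\operatorname{Re}(c_1(0))}{\alpha'(u_0/2)}<0,$$
so the bifurcation is subcritical and the periodic orbit is present for $u<\frac{u_0}{2}$. As a consistency check I would note that this is precisely the interval on which Theorem~\ref{Theorem 3.3}$(i)$ makes $E_3$ an unstable focus, so the newly created stable cycle encircles an unstable equilibrium, which is the expected picture.

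The one genuinely delicate point is the sign bookkeeping. Because $\alpha'(u_0/2)<0$ (raising $u$ \emph{stabilizes} $E_3$), the familiar shortcut ``negative Lyapunov coefficient $\Rightarrow$ supercritical'' is reversed, and the direction $\mu_2$ must be tracked independently of the stability $\beta_2$. I expect no further analytic difficulty: the heavy computation, namely the evaluation of $\ell_1$, is already in hand from Theorem~\ref{Theorem 3.3}, so the remaining work is only to match $\ell_1$ to $\operatorname{Re}(c_1(0))$ and to propagate the two signs through the formulas for $\beta_2$ and $\mu_2$.
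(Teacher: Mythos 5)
Your argument is correct and ends with the same two-negative-signs logic as the paper, but the route to the key coefficient is genuinely different. The paper does not recycle Theorem~\ref{Theorem 3.3}: in Theorem~\ref{Theorem 3.4} it carries out a second reduction from scratch --- the linear change $(X,Y)^{T}=P(\xi,\eta)^{T}$, the polar form, and the evaluation of the cubic radial coefficient $\alpha_{1}\left(\frac{u_0}{2}\right)$ by the standard $\tfrac{1}{16}$ partial-derivative formula, with negativity re-proved via the substitution $\kappa=\sqrt{1+4k}-1$ --- and only then forms $l_{1}=-\alpha_{1}/\alpha'<0$, which is exactly your $\mu_{2}$ (the paper's name ``first Lyapunov coefficient'' for this ratio is loose; it is the Hassard--Kazarinoff--Wan direction coefficient, and your handling of it is the more careful one). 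Your shortcut, reusing the third focus value of Theorem~\ref{Theorem 3.3} as a sign surrogate for $\operatorname{Re}(c_{1}(0))$, is legitimate but does carry the one obligation you acknowledge: both quantities are negative precisely when the weak focus at $u=\frac{u_0}{2}$ is asymptotically stable, an intrinsic property, and the two linear normalizations used in the two proofs are orientation-preserving (the paper's $P$ has $\det P=M>0$, and $(\bar X,\bar Y)=\left(\frac{y_3}{\omega}X,Y\right)$ has determinant $\frac{y_3}{\omega}>0$), so the signs must agree; this identification should be stated as a lemma rather than asserted. What your route buys is economy --- the paper in effect computes the same intrinsic quantity twice, by two different formulas in two different coordinate systems --- while the paper's route buys self-containedness and an independent confirmation of the sign. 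Your closing remark on the reversed shortcut is also exactly the point on which the paper is least precise: since $\alpha'\left(\frac{u_0}{2}\right)<0$, the stable cycle exists for $u<\frac{u_0}{2}$, where Theorem~\ref{Theorem 3.3}$(i)$ makes $E_3$ unstable, so ``subcritical'' in the statement must be read in the parameter-direction sense you use, not in the stability-based sense of the cited reference.
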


\begin{proof}
Translating $E_3$ to the origin by transformation $(X,Y)=(x-x_3,y-y_3)$, and performing Taylor expansion of system \eqref{eq3.2} at the origin to the third order,
\begin{equation*}
\left\{\begin{aligned}
\frac{dX}{dt}=&-x_3(ky_3^2+1)Y-(ky_3^2+1)XY+k^2x_3y_3^3Y^2+k^2y_3^3XY^2-k^3x_3y_3^4Y^3+o(|X,Y|^4),\\
\frac{dY}{dt}=&y_3^2X+(2x_3y_3-m)Y+2y_3XY+x_3Y^2+XY^2.
\end{aligned}\right.
\end{equation*}
Rewriting the above system as
\begin{equation}\label{eq3.8}
\left(\begin{array}{c}
\frac{dX}{dt}\\
\frac{dY}{dt}\\
\end{array}\right)=J(E_3)\left(\begin{array}{c}
X\\
Y\\
\end{array}\right)+\left(\begin{array}{c}
f(x,y,u)\\
g(x,y,u)\\
\end{array}\right),
\end{equation}
where $J(E_3)$ is as in \eqref{eq3.4a}, and
\begin{equation*}
\begin{split}
f(x,y,u)&=-(ky_3^2+1)XY+k^2x_3y_3^3Y^2+k^2y_3^3XY^2-k^3x_3y_3^4Y^3+o(|X,Y|^4), \\ g(x,y,u)&=2y_3XY+x_3Y^2+XY^2.
\end{split}
\end{equation*}
Define a matrix $P=\begin{pmatrix}
  1 & 0 \\
  N & M \\
\end{pmatrix}$, where $N=-\frac{\alpha(u)}{x_3(ky_3^2+1)}$, $M=\frac{\beta(u)}{x_3(ky_3^2+1)}$. When $u=\frac{u_0}{2}$, $M=\frac{y_3}{\sqrt{x_3(ky_3^2+1)}}>0$. Then when $u$ changes near $\frac{u_0}{2}$, $P$ is invertible, and $P^{-1}=\begin{pmatrix}
  1 & 0 \\
  -\frac{N}{M} & \frac{1}{M} \\
\end{pmatrix}$, in addition,
\begin{equation*}
P^{-1}J(E_3)P=\begin{pmatrix}
  \alpha(u) & -\beta(u) \\
  \beta(u) & \alpha(u) \\
\end{pmatrix}.
\end{equation*}
By transformation $(X,Y)^T=P(\xi,\eta)^T$, system \eqref{eq3.8} becomes
\begin{equation}\label{eq3.9}
\left(\begin{array}{c}
\frac{d\xi}{dt}\\
\frac{d\eta}{dt}\\
\end{array}\right)=\begin{pmatrix}
  \alpha(u) & -\beta(u) \\
  \beta(u) & \alpha(u) \\
\end{pmatrix}\left(\begin{array}{c}
\xi\\
\eta\\
\end{array}\right)+\left(\begin{array}{c}
F(\xi,\eta,u)\\
G(\xi,\eta,u)\\
\end{array}\right),
\end{equation}
where
\begin{equation*}
\begin{split}
F(\xi,\eta,u)=&f(\xi,\eta,u)\\
=&-(ky_3^2+1)\xi(N\xi+M\eta)+k^2x_3y_3^3(N\xi+M\eta)^2+k^2y_3^3\xi(N\xi+M\eta)^2\\
&-k^3x_3y_3^4(N\xi+M\eta)^3+o(|\xi,\eta|^4),\\ G(\xi,\eta,u)=&-\frac{N}{M}f(\xi,\eta,u)+\frac{1}{M}g(\xi,\eta,u)\\
=&-\frac{N}{M}[-(ky_3^2+1)\xi(N\xi+M\eta)+k^2x_3y_3^3(N\xi+M\eta)^2+k^2y_3^3\xi(N\xi+M\eta)^2\\
&-k^3x_3y_3^4(N\xi+M\eta)^3+o(|\xi,\eta|^4)]+\frac{1}{M}[2y_3\xi(N\xi+M\eta)+x_3(N\xi+M\eta)^2\\
&+\xi(N\xi+M\eta)^2].
\end{split}
\end{equation*}
We can write system \eqref{eq3.9} into the following polar form
\begin{equation*}
\left\{\begin{aligned}
&\dot{r}=\alpha(u)r+\alpha_1(u)r^3+\cdots,\\
&\dot{\theta}=\beta(u)+\beta_1(u)r^2+\cdots,
\end{aligned}\right.
\end{equation*}
performing Taylor expansion of the above system at $u=\frac{u_0}{2}$, we have
\begin{equation*}
\left\{\begin{aligned}
\dot{r}=&\alpha'\left(\frac{u_0}{2}\right)\left(u-\frac{u_0}{2}\right)r+\alpha_1\left(\frac{u_0}{2}\right)r^3+o\left(\left(u-\frac{u_0}{2}\right)^2r,\left(u-\frac{u_0}{2}\right)r^3,r^5\right),\\
\dot{\theta}=&\beta\left(\frac{u_0}{2}\right)+\beta'\left(\frac{u_0}{2}\right)\left(u-\frac{u_0}{2}\right)+\beta_1\left(\frac{u_0}{2}\right)r^2+o\left(\left(u-\frac{u_0}{2}\right)^2,\left(u-\frac{u_0}{2}\right)r^2,r^4\right).
\end{aligned}\right.
\end{equation*}
In order to examine the direction of the Hopf bifurcation and the stability of Hopf bifurcation periodic solution, we have to determine the sign of $\alpha_1(\frac{u_0}{2})$, where
\begin{equation*}
\begin{split}
\alpha_1\left(\frac{u_0}{2}\right)=&\frac{1}{16}(F_{\xi\xi\xi}+F_{\xi\eta\eta}+G_{\xi\xi\eta}+G_{\eta\eta\eta})+\frac{1}{16\beta\left(\frac{u_0}{2}\right)}[F_{\xi\eta}(F_{\xi\xi}+F_{\eta\eta})-G_{\xi\eta}(G_{\xi\xi}+G_{\eta\eta})\\
&-F_{\xi\xi}G_{\xi\xi}+F_{\eta\eta}G_{\eta\eta}].
\end{split}
\end{equation*}
All partial derivatives in the above formula are calculated at $(\xi,\eta,u)=\left(0,0,\frac{u_0}{2}\right)$.
By calculating, $$F_{\xi\xi\xi}=F_{\xi\xi}=G_{\xi\xi\eta}=G_{\eta\eta\eta}=G_{\xi\xi}=0,$$
$$F_{\xi\eta\eta}=2k^2y_3^3M^2\left(\frac{u_0}{2}\right),~~~F_{\xi\eta}=-(ky_3^2+1)M\left(\frac{u_0}{2}\right),$$
$$F_{\eta\eta}=2k^2x_3y_3^3M^2\left(\frac{u_0}{2}\right),~~~G_{\xi\eta}=2y_3,~~~G_{\eta\eta}=2x_3M\left(\frac{u_0}{2}\right).$$
Noticing that $$M\left(\frac{u_0}{2}\right)=\frac{\beta\left(\frac{u_0}{2}\right)}{x_3(ky_3^2+1)},~~~\beta\left(\frac{u_0}{2}\right)=y_3\sqrt{x_3(ky_3^2+1)},~~~y_3=\frac{u_0}{m}=\frac{\sqrt{1+4k}-1}{2k}.$$ Therefore, we have
\begin{equation*}
\begin{split}
\alpha_1\left(\frac{u_0}{2}\right)=&\frac{1}{16}F_{\xi\eta\eta}+\frac{1}{16\beta\left(\frac{u_0}{2}\right)}(F_{\xi\eta}F_{\eta\eta}-G_{\xi\eta}G_{\eta\eta}+F_{\eta\eta}G_{\eta\eta})\\
=&\frac{1}{16}\left[2k^2y_3^3M^2\left(\frac{u_0}{2}\right)+\frac{1}{\beta\left(\frac{u_0}{2}\right)}\left(-2k^2x_3y_3^3(ky_3^2+1)M^3\left(\frac{u_0}{2}\right)-4x_3y_3M\left(\frac{u_0}{2}\right)\right.\right.\\
&\left.\left.+4k^2x_3^2y_3^3M^3\left(\frac{u_0}{2}\right)\right)\right]\\
=&\frac{x_3y_3M\left(\frac{u_0}{2}\right)}{4\beta\left(\frac{u_0}{2}\right)}\left(\frac{k^2y_3^4}{ky_3^2+1}-1\right)\\
=&\frac{x_3y_3M\left(\frac{u_0}{2}\right)}{4\beta\left(\frac{u_0}{2}\right)}\left(\frac{(\sqrt{1+4k}-1)^4}{4k((\sqrt{1+4k}-1)^2)+4k}-1\right),
\end{split}
\end{equation*}
taking variable substitution $\sqrt{1+4k}-1:=\kappa$, then
\begin{equation*}
\begin{split}
\alpha_1\left(\frac{u_0}{2}\right)=&\frac{x_3y_3M\left(\frac{u_0}{2}\right)}{4\beta\left(\frac{u_0}{2}\right)}\left(\frac{\kappa^4}{((\kappa+1)^2-1)(\kappa^2+(\kappa+1)^2-1)}-1\right)\\
=&\frac{x_3y_3M\left(\frac{u_0}{2}\right)}{4\beta\left(\frac{u_0}{2}\right)}\left(\frac{\kappa^2}{(\kappa+2)(2\kappa+2)}-1\right)<0.
\end{split}
\end{equation*}
The first Lyapunov coefficient
$$l_1\left(\frac{u_0}{2}\right)=-\frac{\alpha_1\left(\frac{u_0}{2}\right)}{\alpha'\left(\frac{u_0}{2}\right)}<0.$$
Therefore, the direction of the Hopf bifurcation is subcritical \cite{Yuri}[Chapter 3.4], and bifurcation periodic solution is orbitally asymptotic stable.
\end{proof}

\section{\Large{Examples and their numerical simulations }}

\par\noindent

In this section, we give an example and figures to illustrate our results.

\begin{Example}
Consider the following system
\begin{equation}\label{eq4.1}
\left\{\begin{aligned}
&\frac{dx}{dt}=\frac{2x}{1+0.5y}-2xy,\\
&\frac{dy}{dt}=2xy^2-0.4y+u.
\end{aligned}\right.
\end{equation}
Comparing system \eqref{eq4.1} with system \eqref{eq1.2}, we see that $r=2$, $k=0.5$, $c=2$, $m=0.4$. Furthermore, $\bar{y}=\frac{c}{r}y=y$, $\tau=2t$, $\bar{k}=\frac{kr}{c}=0.5$, $\bar{m}=\frac{m}{r}=0.2$, $\bar{u}=\frac{cu}{r^2}=\frac{u}{2},$
and system \eqref{eq4.1} becomes
\begin{equation}\label{eq4.2}
\left\{\begin{aligned}
&\frac{dx}{d\tau}=\frac{x}{1+0.5\bar{y}}-x\bar{y},\\
&\frac{d\bar{y}}{d\tau}=x\bar{y}^2-0.2\bar{y}+\bar{u}.
\end{aligned}\right.
\end{equation}
For the convenience of marking on the graphs, we still replace $\bar{y},\tau,\bar{k},\bar{m},\bar{u}$ with the original variable $y,t,k,m,u$ until we make a prompt below. And then, model \eqref{eq4.2} can be rewritten as
\begin{equation}\label{eq4.3}
\left\{\begin{aligned}
&\frac{dx}{dt}=\frac{x}{1+0.5y}-xy,\\
&\frac{dy}{dt}=xy^2-0.2y+u.
\end{aligned}\right.
\end{equation}
By calculating, we obtain
$y_2=\frac{u}{m}=5u$, $y_3=\frac{\sqrt{1+4k}-1}{2k}=\sqrt{3}-1$, and
$u_0=my_3=0.2(\sqrt{3}-1)$. In the following, we calculate and numerically simulate the dynamic behavior of system \eqref{eq4.3} by taking different values for $u$.

$(1)$ Taking $u=0.1(\sqrt{2}-1)$, $u<\frac{u_0}{2}$, system \eqref{eq4.3} has two equilibria,
$E_2(0,y_2)=(0,5u)\approx(0,0.207)$ is a saddle, and $E_3=\left(\frac{0.2y_3-u}{y_3^2},y_3\right)\approx(0.196,0.732)$ is an unstable focus. See Fig.1 (a);

$(2)$ taking $u=0.1$, $\frac{u_0}{2}<u<u_0$, system \eqref{eq4.3} has two equilibria,
$E_2(0,y_2)=(0,5u)=(0,0.5)$ is a saddle, and $E_3=\left(\frac{0.2y_3-u}{y_3^2},y_3\right)\approx(0.087,0.732)$ is a stable focus. See Fig.1 (b);

$(3)$ taking $u=0.2(\sqrt{3}-1)$, $u=u_0$, system \eqref{eq4.3} has a equilibrium,
$E_2(0,y_2)=(0,5u)\approx(0,0.732)$ is an attracting saddle node. See Fig.1 (c);

$(4)$ taking $u=0.2$, $u>u_0$, system \eqref{eq4.3} has a equilibrium,
$E_2(0,y_2)=(0,5u)=(0,1)$ is a stable node. See Fig.1 (d);

\begin{figure}[H]
\centering
\begin{minipage}[c]{0.5\textwidth}
\centering
\includegraphics[height=5cm,width=6cm]{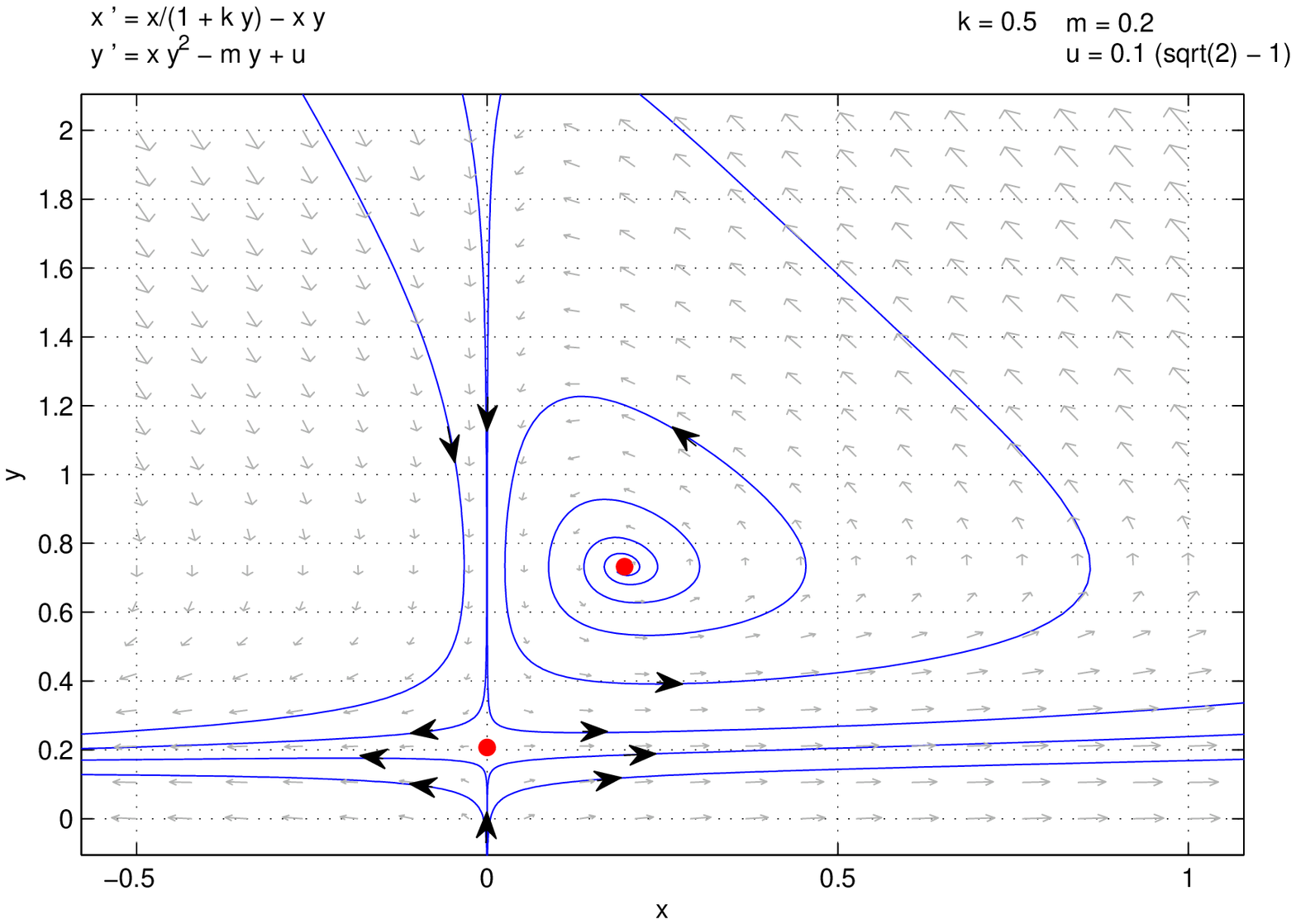}
\end{minipage}%
\begin{minipage}[c]{0.5\textwidth}
\centering
\includegraphics[height=5cm,width=6cm]{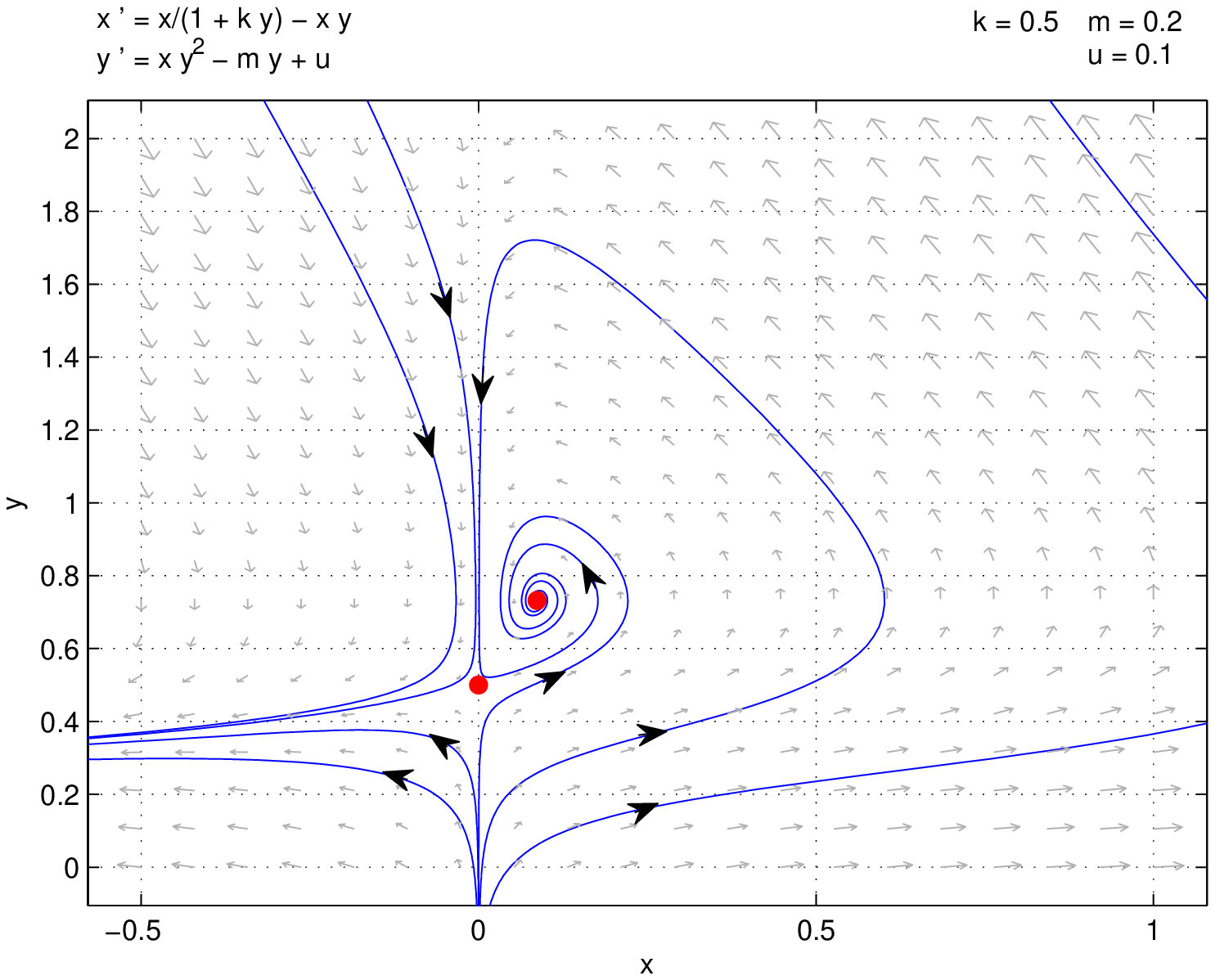}
\end{minipage}
(a) $u<\frac{u_0}{2}$
\ \ \  \ \ \ \ \ \ \ \ \ \
\ \ \  \ \ \ \ \ \ \ \ \ \
\ \ \  \ \ \ \ \ \ \ \ \ \
\ \ \  \ \ \ \ \ \ \ (b) $\frac{u_0}{2}<u<u_0$
\end{figure}

\begin{figure}[H]
\centering
\begin{minipage}[c]{0.5\textwidth}
\centering
\includegraphics[height=5cm,width=6cm]{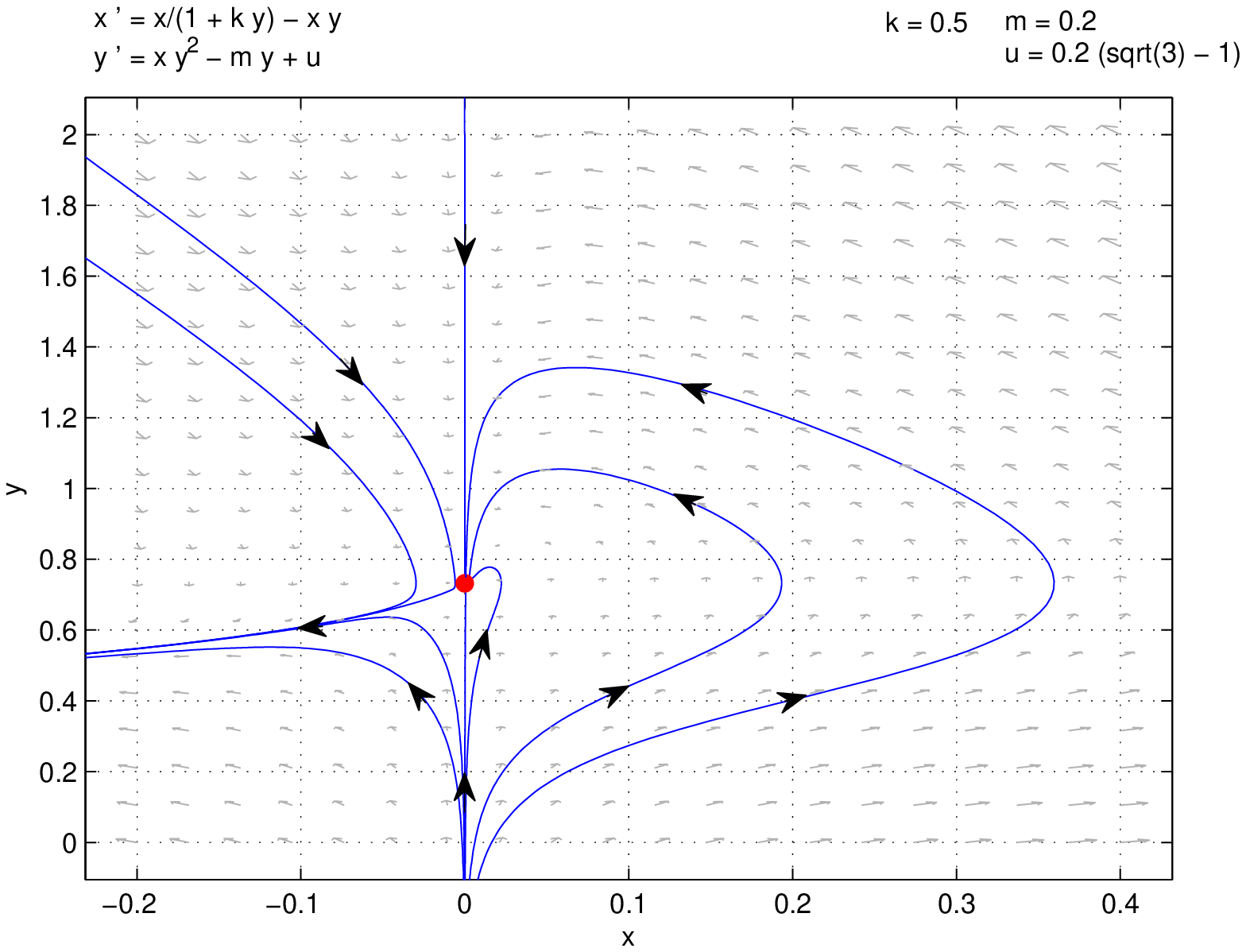}
\end{minipage}%
\begin{minipage}[c]{0.5\textwidth}
\centering
\includegraphics[height=5cm,width=6cm]{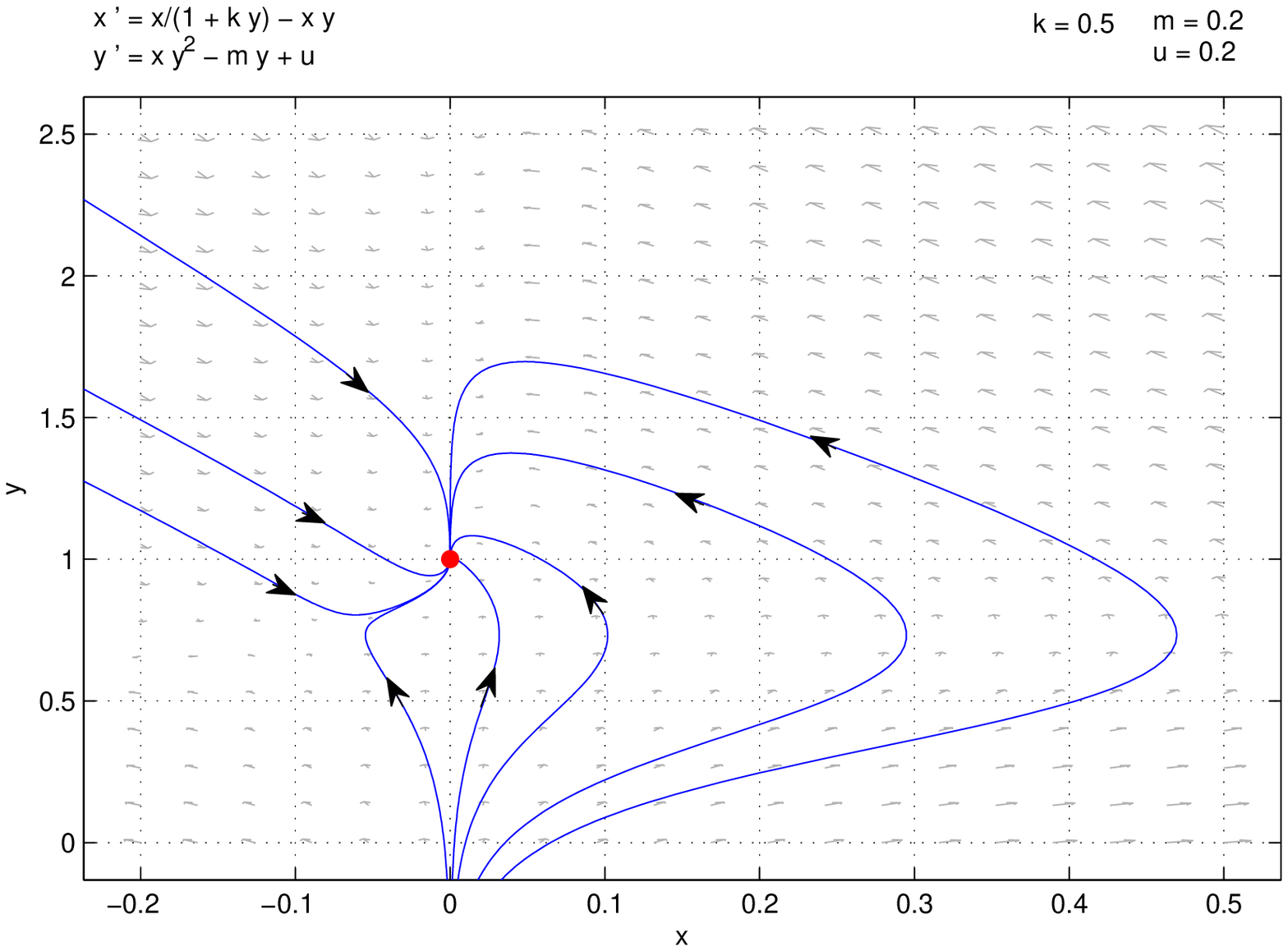}
\end{minipage}
(c) $u=u_0$
\ \ \  \ \ \ \ \ \ \ \ \ \
\ \ \  \ \ \ \ \ \ \ \ \ \
\ \ \  \ \ \ \ \ \ \ \ \ \
\ \ \  \ \ \ \ \ \ \ (d) $u>u_0$
\end{figure}
\centerline{Fig.1~~~Dynamic behavior of system \eqref{eq4.3} in the cases that $(1)-(4)$}

$(5)$ taking $u=0.1(\sqrt{3}-1)$, $u=\frac{u_0}{2}$, system \eqref{eq4.3} has two equilibria,
$E_2(0,y_2)=(0,5u)\approx(0,0.366)$ is a saddle, and $E_3=\left(\frac{0.2y_3-u}{y_3^2},y_3\right)\approx(0.137,0.732)$ is a center type stable focus.
In this situation, system \eqref{eq4.3} undergoes a Hopf bifurcation, and the Hopf bifurcation periodic solution is asymptotic stable. See Fig.2. Furthermore, from Fig.1(a),(b) and Fig.2, we can see the Hopf bifurcation is subcritical.

\begin{figure}[H]
\centering
\begin{minipage}[c]{0.5\textwidth}
\centering
\includegraphics[height=5cm,width=6cm]{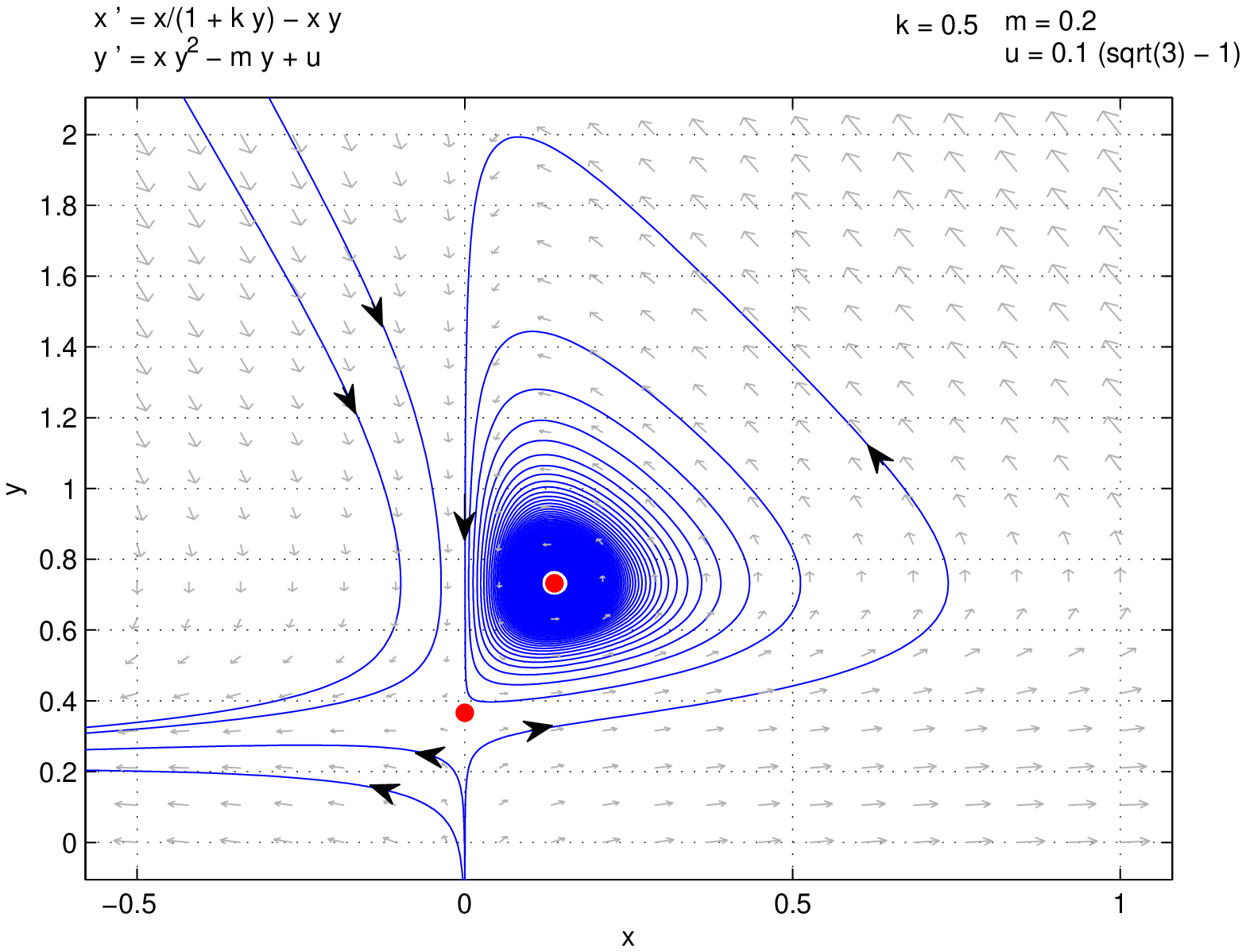}
\end{minipage}%
\begin{minipage}[c]{0.5\textwidth}
\centering
\includegraphics[height=5cm,width=6cm]{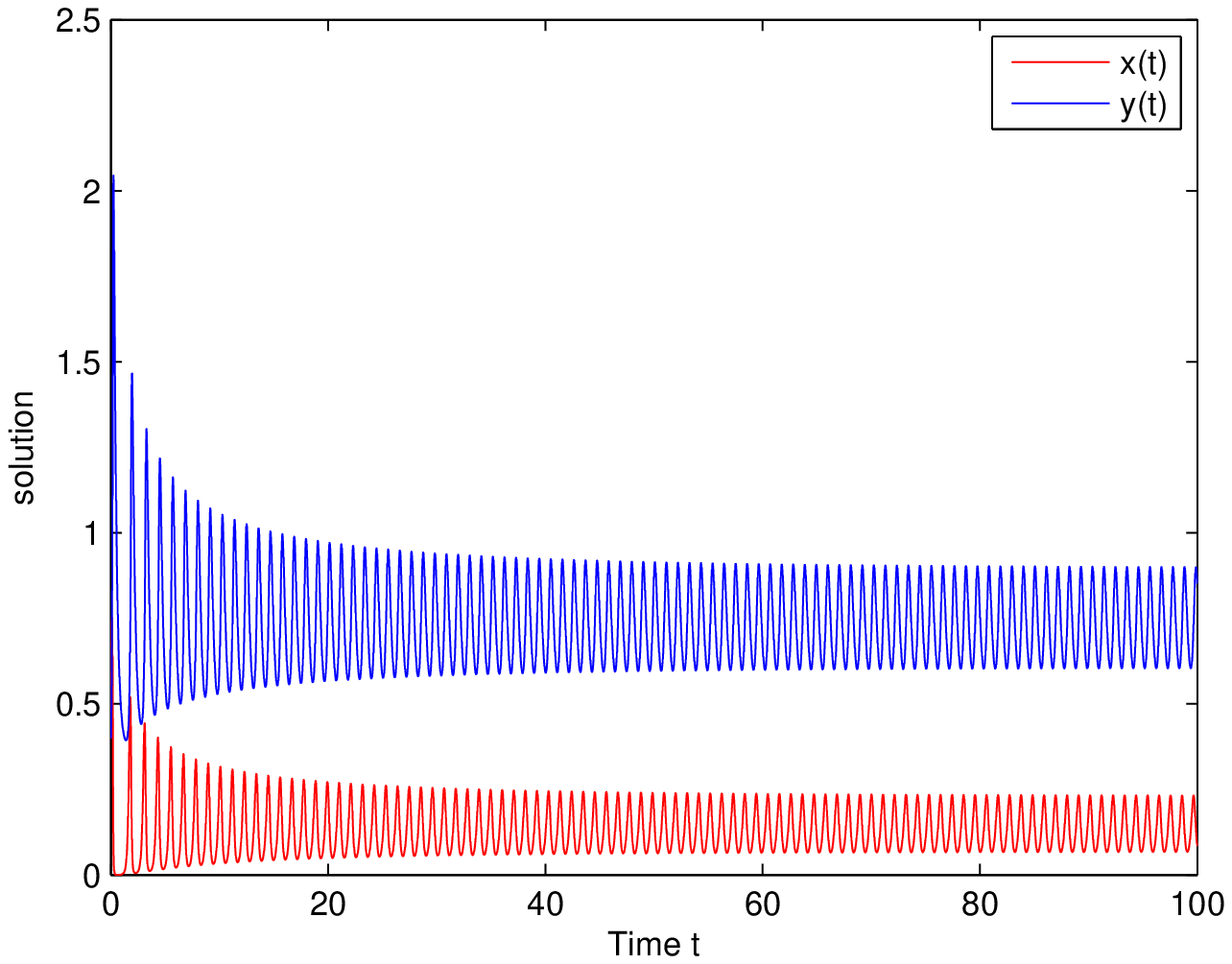}
\end{minipage}
\end{figure}
\centerline{Fig.2~~~The phase diagram of system \eqref{eq4.3} in the case that $u=\frac{u_0}{2}$ and the time series}
\centerline{diagram of the Hopf bifurcation periodic solution}

\end{Example}

\section{\Large{Conclusion}}

%为了观察恐惧因子的加入对模型的影响，我们将本文的结果与文献进行比较，可以发现模型带不带恐惧效应都有两个平衡点，为了直观的比较两个模型的平衡点及稳定性的不同，我们给出下面两个表格，并统一采用本文的符号。并。

In order to observe the impact of the addition of inhibiting factor on model \eqref{eq3.2}, we compare the results of this paper with literature \cite{Wang2009}, it can be seen that the models both with inhibiting effect and without inhibiting effect have two equilibria \raisebox{0mm}{---} a pest-free equilibrium and a positive equilibrium. In order to intuitively compare the differences between the equilibria and stability of the two models, we give two tables in follows, and in where we use the symbols of this paper uniformly.

\begin{center}
 \makeatletter\def\@captype{table}\makeatother
 \newcommand{\tabincell}[2]{\begin{tabular}{@{}#1@{}}#2\end{tabular}}
 \caption{ Comparison at the pest-free equilibria}
 \small
   \label{TableAA}
  \centering
  \renewcommand\arraystretch{1.3}
  \setlength{\tabcolsep}{1mm}{
  \begin{tabular}{lll}
\hline
Equilibrium        & \hspace{4em}Existence &  \hspace{4em}Stability \\
\cline{1-3}
 Without inhibiting effect~~$(0,\frac{u}{m})$      & \hspace{4em}Always exists  &  \hspace{4em}\tabincell{l}{$u<m$, unstable \\$u>m$, stable } \\
\cline{1-3}
 With inhibiting effect~~$(0,\frac{u}{m})$      & \hspace{4em}Always exists &  \hspace{4em}\tabincell{l}{
 $u<my_3$, unstable \\$u>my_3$, stable } \\
\hline
\end{tabular}}
\end{center}

\begin{center}
 \makeatletter\def\@captype{table}\makeatother
 \newcommand{\tabincell}[2]{\begin{tabular}{@{}#1@{}}#2\end{tabular}}
 \caption{ Comparison at the positive equilibria}
 \small
   \label{TableAA}
  \centering
  \renewcommand\arraystretch{1.3}
  \setlength{\tabcolsep}{1mm}{
  \begin{tabular}{lll}
\hline
Equilibrium        & \hspace{4em}Existence &  \hspace{4em}Stability \\
 \cline{1-3}
 Without inhibiting effect~~$(m-u,1)$      & \hspace{4em}$u<m$  &  \hspace{4em}\tabincell{l}{$u<\frac{m}{2}$, unstable  \\$\frac{m}{2}<u<m$, stable } \\
\cline{1-3}
 With inhibiting effect~~$(\frac{my_3-u}{y_3^2},y_3)$      & \hspace{4em}$u<my_3$
 & \hspace{4em}\tabincell{l}{ $u<\frac{my_3}{2}$, unstable  \\$\frac{my_3}{2}<u<my_3$, stable } \\
\hline
\end{tabular}}
\end{center}

%\begin{table}
%\begin{threeparttable}
%\caption{pest-free equilibria}
%\begin{tabular}{p{4cm}p{4cm}p{4cm}}
%\hline
% Equilibrium & Existence & Type\\
%\hline
%fear factor & Always exists & $u<my_3$, saddle\\
%\cline{2-3}
%~~(0,$\frac{u}{m}$) & ~ & $u>my_3$, stable node \\
%\hline
%without fear factor  & Always exists & 3 \\
%\cline{2-3}
%(0,$\frac{u}{m}$)& 4 & 5 \\
%\hline
%\end{tabular}
%\end{threeparttable}
%\end{table}

%从上面两个表格中的数据可见，恐惧因子的加入，使得平衡点的边界从m变成了my。通过计算，y单调递减。当模型没有恐惧效应时，刚好是文献中的模型，并且结果是吻合的。随着恐惧水平k 的增大，y 随之减小，系统能在u 更小的情况下，从不稳定状态变为稳定状态，这就说明害虫对天敌的恐惧能使得在释放更少的线虫的情况下使害虫数量得以控制。这与实际也是吻合的。

From the data in the above two tables, the addition of the inhibiting factor makes the boundary of equilibria change from $m$ to $my_3$ (from $\frac{m}{2}$ to $\frac{my_3}{2}$). It can be obtained by calculation that $y_3=\frac{\sqrt{1+4k}-1}{2k}$ decreases monotonically as $k$ increases and
$$\lim\limits_{k\rightarrow0}y_3=1,~~~0<y_3<1.$$ When $k=0$, that is, when model \eqref{eq3.2} has no inhibiting effect, it happens to be the model in literature \cite{Wang2009}, and the results are also consistent. As the level of inhibition $k$ increases, $y_3$ decreases, and model \eqref{eq3.2} can change from an unstable state to a stable state when $u$ is smaller. This suggests that the inhibition of nematodes on pests allows pests populations to be controlled with fewer nematodes released. And this is also consistent with reality.

Review this paper, we discussed the microbial pesticide model with inhibiting effect in the case that continuous release of nematodes. Through the analysis of the qualitative and stability of the model, we found the best solution to control pests. Next, we analyse system \eqref{eq1.2}, and from now on, we will no longer replace $\bar{y},\tau,\bar{k},\bar{m},\bar{u}$ with $y,t,k,m,u$. From the previous analysis and example verification, we can get the following conclusions:

$(1)$ Both the pest-free equilibrium and the positive equilibrium are unstable if $$u=\frac{c}{r^2}\bar{u}<\frac{c}{2r^2}u_0=\frac{c^2m}{4kr^4}\left(\sqrt{1+\frac{4kr}{c}}-1\right);$$

$(2)$ the pest-free equilibrium is unstable and the positive equilibrium is stable if $$\frac{c^2m}{4kr^4}\left(\sqrt{1+\frac{4kr}{c}}-1\right)=\frac{c}{2r^2}u_0\leq u<\frac{c}{r^2}u_0=\frac{c^2m}{2kr^4}\left(\sqrt{1+\frac{4kr}{c}}-1\right);$$

$(3)$ the unique equilibrium \raisebox{0mm}{---} pest-free equilibrium is stable if $$u\geq \frac{c}{r^2}u_0=\frac{c^2m}{2kr^4}\left(\sqrt{1+\frac{4kr}{c}}-1\right).$$

In summary, if we want to eliminate pests completely, we need to continuously release nematodes, and the speed is not less than $\frac{c^2m}{2kr^4}\left(\sqrt{1+\frac{4kr}{c}}-1\right)$. While if we only want to control the pest density within a certain range, then we only need to continuously release nematodes, and the speed is not less than $\frac{c^2m}{4kr^4}\left(\sqrt{1+\frac{4kr}{c}}-1\right)$.

\section{Conflict of Interest}
\hskip\parindent
The authors declare that they have no conflict of interest.

\section{Data Availability Statement}
%\hskip\parindent
My manuscript has no associated data. %It is pure mathematics.

\section*{Contributions}
 We declare that all the authors have same contributions to this paper.

\end{document}